\theoremstyle{definition}
\newtheorem{thm}{Theorem}
\newtheorem{rem}{Remark}
\title{On formal series solutions to 4th-order quadratic homogeneous differential equations and their convergence}
\author{Tatsuya Hosoi}
\date{\empty}
\begin{document}

\maketitle
\begin{abstract}
It is known that all $\tau$ functions of the Painlev\'{e} equations satisfy the fourth-order quadratic differential equation. Among them, for the III, V, and VI equations, it is possible to express the formal series solutions explicitly by using combinatorics. In this paper, we show the convergence of the formal series, including the solutions of more general equations. And by the absolute convergence of $\tau $ series, the convergence of the conformal block function ($c=1$) also follows since it is a partial sum of the $\tau $ series. We also characterized the form of a homogeneous quadratic equation with a series solution similar to the tau functions of the Painlev\'{e} equations. The Painlev\'{e} equations are classified into six types, and it is known that they can be obtained by sequentially degenerating from type VI to type I.
\end{abstract}
% In this paper, we show the convergence of the formal series and their domain of convergence,
%The Painlev\'{e} equations are classified into six types, and it is known that up to type I can be obtained by sequentially degenerating from type VI.

\section{Introduction}
For algebraic ordinary differential equations in complex domain, Painlev\'{e} property is the property that movable singularities are only poles. A movable singular point is a singularity whose position depend on the initial value. The Painlev\'{e} equations, which have been found in connection with the Painlev\'{e} analysis and isomonodromic deformations, are second-order nonlinear differential equations that satisfy Painlev\'{e} property. They cannot be reduced to simpler equations such as equations of elliptic functions or linear equations.
%the only singularities that move depending on the initial value of the solution

Among those six Painlev\'{e} equations, the Painleve III, V, VI equations ($P_{\text{I\hspace{-.1em}I\hspace{-.1em}I}},P_{\text{V}},P_{\text{V\hspace{-.1 em}I}}$) are algebraic differential equations of the second order as follows:
%where $\alpha,\beta,\gamma,\delta$ are the parameters of the equations.
\begin{align}
&P_{\text{V\hspace{-.1em}I}}:& \frac{d^2q}{dt^2}=&\frac{1}{2} \left( \frac{1}{q} +\frac{1}{q-1}+\frac{1}{q-t} \right) \left( \frac{dq}{dt} \right)^2- \left( \frac{1}{t}+ \frac{1}{t-1}+ \frac{1}{q-t} \right) \frac{dq}{dt} &\nonumber \\
&&&+\frac{2q(q-1)(q-t)}{t^2(t-1)^2} \left( \alpha +\frac{\beta t}{q^2}+\frac{\gamma(t-1)}{(q-1)^2} +\frac{\delta t(t-1)}{(q-t)^2}\right),& \\
&P_{\text{V}}:&\frac{d^2q}{dt^2}=&\left( \frac{1}{2q}+ \frac{1}{q-1} \right) \left( \frac{dq}{dt} \right)^2-\frac{1}{t} \frac{dq}{dt}+\frac{(q-1)^2}{t^2}\left( \alpha q+\frac{\beta}{q} \right)+\frac{\gamma q}{t}+\frac{\delta q(q+1)}{q-1}, \\
&P_{\mathrm{I\hspace{-.1em}I\hspace{-.1em}I'}}:&\frac{d^2q}{dt^2}=&\frac{1}{q} \left( \frac{dq}{dt}\right)^2-\frac{1}{t}\frac{dq}{dt}+\frac{q^2(\alpha +\gamma q)}{4t^2}+\frac{\beta}{4t}+\frac{\delta}{4q}.
\end{align}
Here, $\alpha,\beta,\gamma,\delta$ are the parameters of the equations. $P_{\mathrm{I\hspace{-.1em}I\hspace{-.1em}I'}}$ is obtained by transforming the variables to $t_{\mathrm{I\hspace{-.1em}I\hspace{-.1em}I'}}=t_{\mathrm{I\hspace{-.1em}I\hspace{-.1em}I}}^2,q_{\mathrm{I\hspace{-.1em}I\hspace{-.1em}I'}}=t_{\mathrm{I\hspace{-.1em}I\hspace{-.1em}I}}q_{\mathrm{I\hspace{-.1em}I\hspace{-.1em}I}}$ in the following Painlev\'{e} III equation.

\begin{equation}
P_{\mathrm{I\hspace{-.1em}I\hspace{-.1em}I}}:\frac{d^2q}{dt^2}=\frac{1}{q} \left( \frac{dq}{dt}\right)^2-\frac{1}{t}\frac{dq}{dt}+\frac{\alpha q^2+\beta}{t}+\gamma q^3+\frac{\delta}{q}.
\end{equation}

Each equation has fixed singularities, $0,\infty $ for $P_{\mathrm{I\hspace{-.1em}I\hspace{-.1em}I'}}$ and $P_{\mathrm{V}}$, and $0,1,\infty $ for $P_{\text{V\hspace{-.1em}I}}$.

These equations can be written by using Hamiltonian ($\frac{dq}{dt}=\frac{\partial H_{\text{J}}}{\partial p},\frac{dp}{dt}=-\frac{\partial H_{\text{J}}}{\partial q},\text{J}=\text{V\hspace{-.1em}I},\text{V},\text{I\hspace{-.1em}I\hspace{-.1em}I}'_{1,2,3}$).
%$\theta $s are the parameters derived from $\alpha,\beta,\gamma,\delta$ above,
\begin{align}
%t(t-1)H_{\text{V\hspace{-.1em}I}}=&-\left\{ \kappa_0 (q-1)(q-t)+\kappa_1 q(q-t)+(\theta -1)q(q-1) \right\} \nonumber \\
%&+q(q-1)(q-t)p^2+\kappa (q-t) \\
t(t-1)H_{\text{V\hspace{-.1em}I}}=&q(q-1)(q-t)p\left( p-\frac{2\theta_0}{q}-\frac{2\theta_1}{q-1}-\frac{2\theta_t-1}{q-t} \right) \nonumber \\
&+(\theta_0+\theta_t+\theta_1+\theta_\infty)(\theta_0+\theta_t+\theta_1-\theta_\infty-1)q ,\\
%tH_{\text{V}}=&\left\{ q(q-1)^2p^2-\left( \kappa_0 (q-1)^2+\theta q(q-1)-tq \right) p +\kappa (q-1) \right\} \\
tH_{\text{V}}=&(q-1)(pq-2\theta_t)(pq-q+2\theta_*)-tpq+\left( (\theta_*+\theta_t)^2-\theta_0^2 \right)q \nonumber \\
&+\left( \theta_t-\frac{\theta_*}{2}\right)t-2\left( \theta_t+\frac{\theta_*}{2}\right)^2 ,\\
tH_{\text{I\hspace{-.1em}I\hspace{-.1em}I}'_1}=&(pq+\theta_*)^2+tp-\theta_\star q-\frac{q^2}{4} ,\\
tH_{\text{I\hspace{-.1em}I\hspace{-.1em}I}'_2}=&(pq+\theta_*)^2+tp-q ,\\
tH_{\text{I\hspace{-.1em}I\hspace{-.1em}I}'_3}=&p^2 q^2-q-\frac{t}{q}.
\end{align}
Then, $\alpha,\beta,\gamma,\delta$ are written by $\theta $'s and $\kappa $'s, and with scale transformations of the variables, the Hamiltonian systems encompasses all of $\alpha,\beta,\gamma,\delta$ above.
The notation is based on \cite{gamayun}.

The $\tau$ functions of the Painlev\'{e} equations are then defined as follows:
\begin{align}
t(t-1)\frac{d}{dt}\log \tau_{\mathrm{V\hspace{-.1em}I}}(t)=&t(t-1)H_{\mathrm{V\hspace{-.1em}I}}-q(q-1)p+(\theta_0+\theta_t+\theta_1+\theta_\infty)q \nonumber \\
&-(\theta_1^2+2\theta_0 \theta_1-\theta_\infty^2)t-( \theta_0 +\theta_t)^2 ,\\
%t\frac{d}{dt}\log \tau_{\mathrm{V}}(t)=&tH_{\mathrm{V}}-\left( \frac{1}{2}\kappa_0 +\frac{1}{4}\theta \right) t+\frac{1}{8}\theta^2-\frac{1}{2}(\kappa_0^2+\kappa_\infty^2) \\
%t\frac{d}{dt}\log \tau_{\mathrm{V}}(t)=&tH_{\mathrm{V}}+\frac{\theta_*}{2}t+\theta_0^2+\theta_t^2+\frac{\theta_*^2}{2} ,\\
t\frac{d}{dt}\log \tau_{\mathrm{J}}(t)=&tH_{\mathrm{J}}, ~~~ \mathrm{J}=\mathrm{V}, \mathrm{I\hspace{-.1em}I\hspace{-.1em}I}'_{1,2,3}.
\end{align}

These $\tau$ functions are holomorphic except for the fixed singularities in the equations ($\left\{ 0,\infty \right\} $ for $\tau_{\mathrm{I\hspace{-.1em}I\hspace{-.1em}I'}},\tau_{\mathrm{V}}$, and $\left\{ 0,1,\infty \right\} $ for $P_{\mathrm{V\hspace{-.1em}I}}$), and does not even have a movable  pole. Just as the elliptic function was expressed as a ratio of theta functions which are holomorphic, the solution to the Painlev\'{e} equation can be expressed as a rational function of the $\tau$ function and its derivative.
%singularities inherent
For these $\tau$ functions, explicit formal series solutions of the following form were conjectured by Gamayun, Iorgov, Lisovyy \cite{gamayun}.
\begin{equation}
\tau (t)=\sum_{m\in \mathbb{Z},n\in \mathbb{Z}_{\ge 0}} b_{m,n}(\sigma) t^{(\sigma +m)^2+n} .
\end{equation}
Here, $\sigma$ is an initial value-dependent complex parameter. After that, this expression was proven in multiple ways \cite{bershtein, gavrylenko, iorgov}. For the Painlev\'{e} equations of type I, I\hspace{-.1em}I, I\hspace{-.1em}V, the $\tau$ functions are holomorphic at $t=0$, but is conjectured to have more complicated presentations in their asymptotic expansions at $\infty$ \cite{bonelli,lisovyy}.

Here in pariticular, $\tau_{\mathrm{V\hspace{-.1em}I}}$ is written as follows \cite{bershtein,gavrylenko,iorgov}:
\begin{equation}
\tau_{\mathrm{V\hspace{-.1em}I}}(t)=\text{const} \cdot t^{-\theta_0^2-\theta_t^2} \cdot \sum_{n \in \mathbb{Z}} e^{in\eta}\mathscr{B} \left( \vec{\theta},\sigma +n;t \right). \\
\end{equation}
We get similar expressions for type III and V from the degenerartion of  type V\hspace{-.1em}I \cite{gamayun}. Here $\mathscr{B}(\vec{\theta},\sigma;t)$ is the sum over the all pairs of the Young diagrams, defined as
\begin{align}
\mathscr{B} \left( \vec{\theta},\sigma ;t\right)=&\mathscr{N}^{\theta_1}_{\theta_\infty ,\sigma} \mathscr{N}^{\theta_t}_{\sigma ,\theta_0}t^{\sigma^2}(1-t)^{2 \theta_t \theta_1} \sum_{\lambda ,\mu \in \mathbb{Y}} \mathscr{B}_{\lambda ,\mu}\left( \vec{\theta} ,\sigma \right) t^{\left| \lambda \right|+\left| \mu \right|}, \nonumber \\
\mathscr{B}_{\lambda ,\mu} \left( \vec{\theta} , \sigma \right)=&\prod_{(i,j)\in \lambda}\frac{\left( (\theta_t-\sigma+i-j)^2-\theta_0^2 \right) \left( (\theta_1-\sigma+i-j)^2-\theta_\infty^2 \right)}{h_\lambda^2(i,j)\left( \lambda_j^\prime -i+\mu_i-j+1+2 \sigma \right)^2}, \nonumber \\
&\times \prod_{(i,j)\in \mu}\frac{\left( (\theta_t-\sigma+i-j)^2-\theta_0^2 \right) \left( (\theta_1-\sigma+i-j)^2-\theta_\infty^2 \right)}{h_\mu^2(i,j)\left( \mu_j^\prime -i+\lambda_i-j+1+2 \sigma \right)^2}, \nonumber \\
\mathscr{N}^{\theta_2}_{\theta_3,\theta_1}=&\frac{\prod_{\varepsilon =\pm} G(1+\theta_3+\varepsilon (\theta_1+\theta_2))G(1-\theta_3+\varepsilon (\theta_1-\theta_2))}{G(1-2 \theta_1)G(1-2 \theta_2)G(1+2 \theta_3)}. \nonumber 
\end{align}
Here $\sigma \notin \mathbb{Z}/2,\eta$ are arbitrary complex parameters, and $G(z)$ denotes the Barnes $G$ function. For this series solution, the recurrence relation $G(1)=1,G(z+1)=\mathrm{\Gamma}(z)G(z)$ is important. And, $\lambda^{\prime}$ is the conjugate of the Young diagram $\lambda$ and $h_\lambda (i,j)$ is the hook length of the cell $(i,j)$ of $\lambda$. For $\tau_{\mathrm{I\hspace{-.1em}I\hspace{-.1em}I}'_3}$, it is directly proved from the concrete expression of this series that its formal series converges over the entire universal covering of $\mathbb{C}\setminus \{ 0\}$ \cite{its}. But the same method could not be applied to other $\tau$ functions and the convergence problem remained unsolved.
%it is directly proved from the concrete expression of this series that its formal series converges over the entire universal covering of $\mathbb{C}\setminus \{ 0\}$ \cite{its},

On the other hand, these $\tau$ functions are solutions to a fourth-order homogeneous quadratic differential equation. In this paper, we show that the $\tau$ functions have a convergence region by constructing a dominating series, and furthermore, we show in what region the convergence is guaranteed.

For $P_{\mathrm{I\hspace{-.1em}I\hspace{-.1em}I}}$ and $P_{\mathrm{V}}$, the $\tau$ functions are the solutions to the following equation
\begin{align}
\tau_{\mathrm{V}}:~& \mathcal{D}^4_{\log t}f\cdot f +(4\sigma_2 +1-t^2)\mathcal{D}^2_{\log t}f\cdot f-4\mathcal{D}^2_{\log t}(\delta f)\cdot f-4\sigma_3 tf\cdot f=0, \\
\tau_{\mathrm{I\hspace{-.1em}I\hspace{-.1em}I}'_1}:~& \mathcal{D}^4_{\log t}f\cdot f+\mathcal{D}^2_{\log t}f\cdot f-4\mathcal{D}^2_{\log t}(\delta f)\cdot f +4\theta_\star \theta_* t f^2+t^2 f^2=0,\\
\tau_{\mathrm{I\hspace{-.1em}I\hspace{-.1em}I}'_2}:~& \mathcal{D}^4_{\log t}f\cdot f+\mathcal{D}^2_{\log t}f\cdot f-4\mathcal{D}^2_{\log t}(\delta f)\cdot f+4\theta_* t f^2 =0,\\
\tau_{\mathrm{I\hspace{-.1em}I\hspace{-.1em}I}'_3}:~& \mathcal{D}^4_{\log t}f\cdot f+\mathcal{D}^2_{\log t}f\cdot f-4\mathcal{D}^2_{\log t}(\delta f)\cdot f+4t f^2=0.
\end{align}%V型とVI型も加える
Here, $\sigma_2$ and $\sigma_3$ are 2nd and 3rd order elementary symmetric expressions of $v_j$ ($j=1,2,3,4$) : $v_1=\frac{1}{2} \theta_* +\theta_0$, $v_2=\frac{1}{2} \theta_* -\theta_0$, $v_3=-\frac{1}{2} \theta_* +\theta_t$, $v_4-\frac{1}{2} \theta_* -\theta_t$ for $\tau_{\mathrm{V}}$. The differential operator $\delta$ is defined as $\delta =t\frac{d}{dt}$. Also, $\mathcal{D}_{\log t}$ is the Hirota derivative with respect to $\delta$ and is defined as follows. These are bilinear operators.
\begin{equation}
\mathcal{D}^N_{\log t} f\cdot g:=\sum_{i=0}^N (-1)^i \binom{N}{i} \delta^{N-i}f \cdot \delta^ig.
\end{equation}
In particular, the following holds.
\begin{align}
\mathcal{D}^2_{\log t} f\cdot g=&\delta^2f \cdot g-2\delta f\cdot g+f\cdot \delta^2 g ,\\
\mathcal{D}^4_{\log t} f\cdot g=&\delta^4 f\cdot g-4\delta^3f \cdot \delta g+6\delta^2f \cdot \delta^2g-4\delta f\cdot \delta^3g+f\cdot \delta^4g.
\end{align}

 And for $P_{\mathrm{V\hspace{-.1em}I}}$, $f=t^{\frac{\theta_0^2+\theta_t^2-\theta_1^2-\theta_\infty^2}{2}}(1-t)^{\frac{\theta_t^2+\theta_1^2-\theta_0^2-\theta_\infty^2}{2}}\tau_{\mathrm{V\hspace{-.1em}I}}$ is the solution to the following equation
\begin{gather}
\tau_{\mathrm{V\hspace{-.1em}I}}: \mathcal{D}_{\bar{D}}^4f\cdot f +(2t(t-1)+1-e_1)\mathcal{D}_{\bar{D}}^2f\cdot f-4(2t-1)\mathcal{D}_{\bar{D}}^2 (\bar{D}f)\cdot f \nonumber \\
-t(t-1)(2\sigma_4(2t-1)+e_2)f\cdot f +4t(t-1)(\bar{D}f)\cdot (\bar{D}f)=0.
\end{gather}
Similarly, $\mathcal{D}_{\bar{D}}$ is the Hirota derivative with respect to $\bar{D}=t(t-1)\frac{d}{dt}$.

For these quadratic differential equations for $\tau$, the lowest degree parts in $t$ (the differential operator $\frac{d}{dt}$ is counted as degree $-1$) have the common form $\mathcal{D}^4_{\log t}f\cdot f+(1+\alpha)\mathcal{D}^2_{\log t}f\cdot f-4\mathcal{D}^2_{\log t}(\delta f)\cdot f $. This is also true for $\tau_{\mathrm{V\hspace{-.1em}I}}$ when its equation is rewritten in $\mathcal{D}_{\log t}$. Since $\alpha $ in the equations are canceled by gauge transformation of the solution $f \mapsto t^{\frac{\alpha}{4}} f$, we can consider $\mathcal{D}^4_{\log t}f\cdot f+\mathcal{D}^2_{\log t}f\cdot f-4\mathcal{D}^2_{\log t}(\delta f)\cdot f $. For a more general form of the homogeneous quadratic equations involving the equations satisfied by these $\tau$ functions, we show the convergence of their solutions.

\section{Fourth order homogeneous quadratic equations}
 The $\tau$ functions of the Painlev\'{e} I\hspace{-.1em}I\hspace{-.1em}I, V, V\hspace{-.1em}I equations satisfy the homogeneous quadratic differential equations and there exists formal series solutions of the following form \cite{gamayun}.
\begin{equation}
\tau (t)=\sum_{m\in \mathbb{Z},n\in \mathbb{Z}_{\ge 0}} b_{m,n}(\sigma) t^{(\sigma +m)^2+n} \label{eqe}
\end{equation}

This is also a special case of of a series of the following form:
\begin{equation}
f(t)=t^{\sigma^2}\sum_{\substack{(m,n)\in {\mathbb{Z}_{\ge 0}}^2 \\ \text{or} \ (m,n)\in \left( \frac{1}{2} +\mathbb{Z}_{\ge 0} \right)^2}} a_{m,n}(\sigma)t^{(2\sigma +1)m+(-2\sigma +1)n}. \label{eqa}
\end{equation}
For a homogeneous quadratic equation with a series solution of this form, the following holds.

\begin{thm}
Let $N_0$ be a positive integer. Consider a quadratic differential equation for $f$:
\begin{align}
\sum_{N=1}^{N_0} t^N \sum_{0 \le K_1 \le K_2< \infty}\alpha_{N,K_1,K_2}t^{K_1+K_2} \frac{d^{K_1}f}{dt^{K_1}} \cdot \frac{d^{K_2}f}{dt^{K_2}} \nonumber \\
+\sum_{0 \le K_1 \le K_2 \le 4}\alpha_{0,K_1,K_2}t^{K_1+K_2} \frac{d^{K_1}f}{dt^{K_1}} \cdot \frac{d^{K_2}f}{dt^{K_2}}=0. \label{eqb}
\end{align}
A necessary and sufficient condition for (\ref{eqb}) to have a formal series solution satisfying $a_{0,0},a_{1,0},a_{0,1}\ne 0$ of the form:
\begin{equation}
f(t)=t^{\sigma^2}\sum_{\substack{(m,n)\in {\mathbb{Z}_{\ge 0}}^2 \\ \text{or} \ (m,n)\in \left( \frac{1}{2} +\mathbb{Z}_{\ge 0} \right)^2}} a_{m,n}(\sigma)t^{(2\sigma +1)m+(-2\sigma +1)n}
\end{equation}
with $\sigma \notin \mathbb{Q}$ as an arbitrary parameter is, that the lowest-degree part of the equation for $t$ $\sum_{0 \le K_1 \le K_2 \le 4}\alpha_{0,K_1,K_2}t^{K_1+K_2} \frac{d^{K_1}f}{dt^{K_1}} \cdot \frac{d^{K_2}f}{dt^{K_2}}$ be a constant multiple of
\begin{equation}
2t^4\left\{ f \cdot \frac{d^4f}{dt^4}-4\frac{df}{dt} \cdot \frac{d^3f}{dt^3}+3\left( \frac{d^2f}{dt^2} \right)^2 \right\}+8t^3\left( f \cdot \frac{d^3t}{dt^3} -\frac{df}{dt} \cdot \frac{d^2f}{dt^2} \right)+4t^2f \cdot \frac{d^2f}{dt^2}. \label{eq+}
\end{equation}
In particular, quadratic differential equations of the third order and below do not have solutions of the form described above.
\end{thm}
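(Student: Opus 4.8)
\medskip
\noindent\emph{Proof strategy.} The plan is to pass to the symbol calculus of weight‑preserving bilinear operators and then read off the constraints forced on the lowest‑degree part by the two lowest non‑trivial coefficients of the resulting recurrence. First I would observe that each inner sum $\sum_{K_1\le K_2}\alpha_{N,K_1,K_2}t^{K_1+K_2}\frac{d^{K_1}f}{dt^{K_1}}\cdot\frac{d^{K_2}f}{dt^{K_2}}$ is homogeneous of degree $0$ in the grading of the statement ($t$ counting $+1$ and $\frac{d}{dt}$ counting $-1$), hence equals $\mathcal{L}_N[f\cdot f]$ for a bilinear operator polynomial in $\delta=t\frac{d}{dt}$, and is therefore determined by its symmetric symbol $P_N(a,b)$, defined by $\mathcal{L}_N[t^a\cdot t^b]=P_N(a,b)\,t^{a+b}$ (the monomial $t^{K_1+K_2}\frac{d^{K_1}f}{dt^{K_1}}\cdot\frac{d^{K_2}f}{dt^{K_2}}$ has symmetrised symbol $\tfrac12[(a)_{K_1}(b)_{K_2}+(a)_{K_2}(b)_{K_1}]$, $(x)_k$ the falling factorial). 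Thus the $N=0$ part $L_0$ has symbol $P_0$, symmetric with $\deg_aP_0,\deg_bP_0\le4$, while a direct computation identifies (\ref{eq+}) with $\mathcal{D}^4_{\log t}f\cdot f+\mathcal{D}^2_{\log t}f\cdot f-4\mathcal{D}^2_{\log t}(\delta f)\cdot f$, whose symbol is $(a-b)^2\bigl((a-b)^2-2(a+b)+1\bigr)$. Writing $\rho_{m,n}=\sigma^2+2\sigma(m-n)+(m+n)$ and using $\sigma\notin\mathbb{Q}$, the exponents $\rho_{m,n}$ and all sums $\rho_{m,n}+\rho_{m',n'}+N$ that occur coincide only when their ``$\sigma$‑parts'' and their ``integer parts'' agree separately; so substituting the ansatz turns (\ref{eqb}) into a scalar system indexed by these exponents, organised by the level $\ell=m+n$.

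\noindent For necessity I would start from the coefficient of $t^{2\sigma^2}$, which is $a_{0,0}^2P_0(\sigma^2,\sigma^2)$ (no shift $t^{N\ge1}$ and no half‑integer term reaches this lowest exponent); hence $P_0(\sigma^2,\sigma^2)=0$ for all $\sigma\notin\mathbb{Q}$, i.e.\ $P_0(s,s)\equiv0$, and by symmetry $(a-b)^2\mid P_0$, say $P_0=(a-b)^2R$ with $R=c_0+c_1(a+b)+c_2(a^2+b^2)+c_3ab+c_4ab(a+b)+c_5a^2b^2$. A short exponent count — ruling out every other pair of indices, the entire half‑integer sector, and all shifts $t^{N\ge1}$, again via $\sigma\notin\mathbb{Q}$ — shows that the coefficients of $t^{\rho_{0,0}+\rho_{1,0}}$ and $t^{\rho_{0,0}+\rho_{0,1}}$ are exactly $2a_{0,0}a_{1,0}P_0(\sigma^2,\rho_{1,0})$ and $2a_{0,0}a_{0,1}P_0(\sigma^2,\rho_{0,1})$. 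As $a_{0,0},a_{1,0},a_{0,1}\ne0$, $\rho_{1,0}=(\sigma+1)^2$, $\rho_{0,1}=(\sigma-1)^2$, and the factors $(2\sigma\pm1)^2$ are not identically zero, this yields the polynomial identities $R(s^2,(s+1)^2)\equiv0$ and $R(s^2,(s-1)^2)\equiv0$. Matching coefficients of $s^8,s^6,s^4,s^2,s^0$ in the first forces $c_5=c_4=0$, $c_3=c_1=-2c_2$, $c_0=c_2$ (the odd powers vanish automatically, and the second identity then holds identically), so $R=c_2\bigl((a-b)^2-2(a+b)+1\bigr)$ and $P_0$ is the asserted constant multiple of (\ref{eq+}), with $c_2\ne0$ since $L_0\ne0$.

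\noindent For sufficiency I would assume $L_0=c\cdot(\ref{eq+})$, $c\ne0$, the higher parts arbitrary, and build the $a_{m,n}$ by induction on $\ell$ with $a_{0,0}$ an arbitrary nonzero number. In the equation carrying $t^{2\sigma^2+2\sigma(m-n)+(m+n)}$, $m+n=\ell$, the sole term involving a level‑$\ell$ coefficient is $2a_{0,0}P_0(\sigma^2,\rho_{m,n})\,a_{m,n}$; every remaining contribution, including those from the shifts $t^{N\ge1}$ (which one checks always strictly lower the level, a shift $N$ forcing the source indices to be either in the same sector with $N$ even or in opposite sectors with $N$ odd), involves only coefficients of level $<\ell$. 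Since $P_0(\sigma^2,\rho_{m,n})=c\,u^2\bigl((u-1)^2-4\sigma^2\bigr)$ with $u=2\sigma(m-n)+(m+n)$ vanishes, for $\sigma\notin\mathbb{Q}$, exactly at $(m,n)\in\{(0,0),(1,0),(0,1)\}$, the recurrence determines $a_{m,n}$ uniquely whenever $\ell\ge2$ and at every half‑integer index, while the two equations at $(1,0)$ and $(0,1)$ reduce to $0=0$, so $a_{1,0},a_{0,1}$ stay free and may be chosen nonzero; this produces a solution of the required form for every $\sigma\notin\mathbb{Q}$, giving the equivalence. For the closing clause: if (\ref{eqb}) has order $\le3$, the symbol of its lowest nonvanishing weight component has bidegree $\le(3,3)$ but, by the same level‑$0$ and level‑$1$ analysis, must vanish doubly on $\{a=b\}$ and on the curve $(a-b)^2-2(a+b)+1=0$, whose defining polynomial $a^2-2a(b+1)+(b-1)^2$ is irreducible (its discriminant $16b$ in $a$ is not a square); hence it is divisible by the bidegree‑$(4,4)$ product $(a-b)^2\bigl((a-b)^2-2(a+b)+1\bigr)$ — impossible unless it vanishes — so no such solution exists.

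\noindent The real obstacle is the exponent bookkeeping underlying the middle two paragraphs: one must verify that at the two lowest non‑trivial levels nothing beyond the displayed monomials contributes (this is exactly where $\sigma\notin\mathbb{Q}$ and the precise shift structure $t^N$, $N\ge1$, are used) and keep the coexisting integer‑ and half‑integer‑index sectors under control. Once that is in place, the statement collapses to the six linear conditions on $c_0,\dots,c_5$ computed above.
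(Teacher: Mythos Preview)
Your argument is correct and follows the same skeleton as the paper's proof: both extract the constraints on the degree-zero part from the coefficients of $t^{2\sigma^2}$ and $t^{2\sigma^2\pm(2\sigma+1)}$, and both run the same recursion for sufficiency, checking that the pivot vanishes exactly at $(0,0),(1,0),(0,1)$.

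The packaging, however, is genuinely cleaner than the paper's. Where the paper works directly with the fifteen unknowns $\alpha_{0,K_1,K_2}$, writes out nine relations from the $t^{2\sigma^2}$ coefficient, and then grinds through a degree-ten polynomial identity in $\sigma$, you pass to the symmetric symbol $P_0(a,b)$ and observe that the level-$0$ constraint is exactly $(a-b)^2\mid P_0$, reducing to the six-dimensional quotient $R$; the level-$1$ constraint then becomes the single polynomial identity $R(s^2,(s+1)^2)\equiv 0$ (your remark that this forces $R(s^2,(s-1)^2)\equiv 0$ as well, via the symmetry $s\mapsto s-1$, explains why the paper can get away with using only one of the two level-$1$ exponents). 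One small wording issue: it is not that the odd powers of $s$ vanish, but that $R(s^2,(s+1)^2)$ is a polynomial in $w=s^2+s$ (by the symmetry $s\mapsto -1-s$), so the five coefficients in $w$ give exactly your five relations on $c_0,\dots,c_5$. Your treatment of the closing clause is also more than the paper offers: the paper simply asserts the order-$\le 3$ statement ``in particular'', whereas you rerun the level-$0$/level-$1$ analysis at the lowest nonvanishing weight and invoke irreducibility of $(a-b)^2-2(a+b)+1$ to force bidegree $\ge(4,4)$, which handles the edge case $L_0=0$ that the paper leaves implicit.
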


\begin{proof}

First, since the differential operator $\delta=t\frac{d}{dt}$ does not change the degree with respect to $t$, we rewrite (\ref{eq+}) into an equation for $\delta$ to prove it. That is, instead of (\ref{eq+}), we consider 
\begin{equation}
2 f\cdot \delta^4f-8\delta f\cdot \delta^3f+6(\delta^2f)^2+2f\cdot \delta^2f-2(\delta f)^2-4f\cdot \delta^3f+4\delta f\cdot \delta^2 f.
\end{equation}
This can be expressed by using Hirota derivative as follows.
\begin{equation}
\mathcal{D}_{\log t}^4f\cdot f+\mathcal{D}_{\log t}^2f\cdot f-4\mathcal{D}_{\log t}(\delta f)\cdot f.
\end{equation}

(Necessity) Substituting (\ref{eqa}) for (\ref{eqb}), the coefficient of $t^{2 \sigma ^2}$ is
\begin{equation}
\sum_{0\le K_1\le K_2\le 4}\alpha_{0,K_1,K_2}\sigma^{2(K_1+K_2)}a_{0,0}^2 .
\end{equation}
Since this must be $0$ regardless of $\sigma$, considering the coefficient of $\sigma^{2K_0}$, for $K_0\ (=K_1+K_2)$ which is $0\le K_0\le 8$,
\begin{equation}
\sum_{\substack{0\le K_1\le K_2\le 4\le K_1+K_2=K_0}}\alpha_{0,K_1,K_2}=0,
\end{equation}
is satisfied. When written down, 
\begin{gather}
\alpha_{0,0,0}=0,~ \alpha_{0,0,1}=0,~ \alpha_{0,3.4}=0,~ \alpha_{0,4,4}=0, \nonumber \\
\alpha_{0,0,2}+\alpha_{0,1,1}=0,~ \alpha_{0,0,3}+\alpha_{0,1,2}=0,~ \alpha_{0,1,4}+\alpha_{0,2,3}=0,~ \alpha_{0,2,4}+\alpha_{0,3,3}=0, \nonumber \\
\alpha_{0,0,4}+\alpha_{0,1,3}+\alpha_{0,2,2}=0. \label{eqc}
\end{gather}

And also the coefficient of $t^{2 \sigma^2+2 \sigma +1}$ is,
\begin{equation}
\sum_{0\le K_1\le K_2\le 4} \alpha_{0,K_1,K_2}\left( \sigma^{2K_1}(\sigma +1)^{2K_2}+(\sigma+1)^{2K_1}\sigma^{2K_2}\right) a_{0,0}a_{1,0},
\end{equation}
In combination with (\ref{eqc}), 
\begin{gather}
\sigma^4 (\sigma+1)^4(2 \sigma +1)^2 \alpha_{0,2,4}+\sigma^2(\sigma+1)^2(2\sigma +1)^2(2 \sigma^2+2\sigma+1)\alpha_{0,1,4} \nonumber \\
+\left( \sigma^8+(\sigma+1)^8\right) \alpha_{0,0,4}+\sigma^2 (\sigma+1)^2 \left( \sigma^4 +(\sigma+1)^4\right) \alpha_{0,1,3}-2\sigma^4(\sigma+1)^4 (\alpha_{0,0,4}+\alpha_{0,1,3}) \nonumber \\
+(2 \sigma+1)^2(2 \sigma^2+ 2\sigma+1)\alpha_{0,0,3}+(2\sigma+1)^2\alpha_{0,0,2} =0,
\end{gather}
Since this equation holds regardless of $\sigma$, the claim follows by comparing the coefficients of the terms, starting from the higher order ones with respect to $\sigma$.

(Sufficiency) If the latter part of (\ref{eqb}), which is the lowest degree part of $t$, is of the form (\ref{eq+}), then substituting (\ref{eqa}), the coefficients of $t^{2\sigma^2+(2\sigma+1)m+(-2\sigma+1)n}$ are functions of $a_{k,l}$ such that $0 \le k \le m,0\le l \le n$ and its only term containing $a_{m,n}$ is
\begin{equation}
4\left( (2\sigma +1)m+(-2\sigma+1)n\right)^2\left\{ \left( (2\sigma +1)m+(-2\sigma+1)n-1\right)^2 -4\sigma^2 \right\}a_{0,0} a_{m,n}. \label{eqt}
\end{equation}
Only when $(m,n)=(0,0),(0,1),(1,0)$, this term becomes zero. Also, when $(m,n)=(0,0),(0,1),(1,0)$, the coefficient of $t^{2\sigma^2+(2\sigma+1)m+(-2\sigma+1)n}$ is exactly (\ref{eqt}), which is $0$. Therefore, if $a_{0,0}, a_{0,1}, a_{1,0}$ are determined, the remaining coefficients $a_{m,n}$ are uniquely determined, and the series is the formal solution of the equation.
\end{proof}

\begin{rem}
Stronger than the theorem, it is observed that the solutions of some equations of the form (\ref{eqa}), including the Painlev\'{e} equations, actually take the form (\ref{eqe}). That is, when the solution is expressed as:
\begin{equation}
f(t)=t^{\sigma^2} \sum_{x\in \mathbb{Z},y\in \mathbb{Z}_{\ge 0}} c_{x,y} t^{(2\sigma x+y)},
\end{equation}
we showed that $c_{x,y}=0$ for $x,y$ such that $y < \left| x \right|$, but actually $c_{x,y}=0$ for $x,y$ such that $y<x^2$ in some equations.
\end{rem}

Next, we prove the convergence of the formal solution.
\begin{thm}
Let $N_0$ be a positive integer. Consider a quadratic equation of the following form
\begin{equation}
\mathcal{D}_{\log t}^4f\cdot f+\mathcal{D}_{\log t}^2f\cdot f-4\mathcal{D}^2_{\log t}(\delta f)\cdot f +\sum_{N=1}^{N_0} t^N \sum_{K=0}^{4} \sum_{I=0}^{\left[ \frac{1}{2} K \right]}\alpha_{N,K,I}\delta^I f \delta^{K-I}f=0, \label{eq1}
\end{equation}
And consider its formal series solution of the form
\begin{equation}
f(t)=t^{\sigma^2}\sum_{\substack{(m,n)\in {\mathbb{Z}_{\ge 0}}^2 \\ \text{or} \ (m,n)\in \left( \frac{1}{2} +\mathbb{Z}_{\ge 0} \right)^2}} a_{m,n}t^{(2\sigma +1)m+(-2\sigma +1)n}, \label{eq2}
\end{equation}
When $\sigma \notin \mathbb{Q},\left| \mathrm{Re} \ \sigma \right| <\frac{1}{2}$, the formal series has a convergence domain in $\mathbb{C}\backslash\{ 0\}$ that contains a neighborhood of $0$.
\end{thm}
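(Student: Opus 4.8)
The plan is to derive the recursion expressing each coefficient $a_{m,n}$ through products of earlier coefficients, and then to dominate that recursion by a convergent majorant series (Cauchy's method of majorants). Write $\lambda_{m,n}:=(2\sigma+1)m+(-2\sigma+1)n=2\sigma(m-n)+(m+n)$, so that (\ref{eq2}) reads $f=t^{\sigma^2}\sum a_{m,n}t^{\lambda_{m,n}}$, and record the identity $\mathcal D^N_{\log t}(t^a)\cdot(t^b)=(a-b)^N t^{a+b}$, immediate from the definition of $\mathcal D^N_{\log t}$, which turns every coefficient extracted from (\ref{eq1}) into an explicit polynomial in $\sigma$ and in the exponents. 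First I would substitute (\ref{eq2}) into (\ref{eq1}) and collect the coefficient of $t^{2\sigma^2+\lambda_{m,n}}$. Because $1$ and $\sigma$ are linearly independent over $\mathbb Q$ (recall $\sigma\notin\mathbb Q$), the $t^N$-term with $1\le N\le N_0$ can only produce products $a_{k,l}a_{k',l'}$ with $(k,l)+(k',l')=(m-\tfrac N2,n-\tfrac N2)$, and the lowest-degree part only products with $(k,l)+(k',l')=(m,n)$; all such pairs have both indices of level $k+l,\,k'+l'<m+n$, except the two ``extreme'' pairs $\{(k,l),(k',l')\}=\{(0,0),(m,n)\}$ of the lowest-degree part. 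By the computation behind (\ref{eqt}) the only term carrying $a_{m,n}$ is $4\lambda_{m,n}^2\big((\lambda_{m,n}-1)^2-4\sigma^2\big)a_{0,0}a_{m,n}$, which vanishes exactly when $(m,n)$ lies in the finite exceptional set $\{(0,0),(1,0),(0,1)\}$. Hence, assuming $a_{0,0}\ne0$ (as for the Painlev\'e $\tau$-functions), for every $(m,n)$ outside that set
\[
a_{m,n}=\frac{-\Phi_{m,n}}{4\lambda_{m,n}^2\big((\lambda_{m,n}-1)^2-4\sigma^2\big)a_{0,0}},
\]
where $\Phi_{m,n}$ is the finite sum of the remaining products, each with a scalar coefficient of modulus $\le C(1+m+n)^4$; the degree $4$ comes from the $\mathcal D^4_{\log t}$ term, the $t^N$-terms contributing degree $\le 4$ by the monomial identity.

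The crux is a matching lower bound for the denominator, and here both hypotheses are used. Since $|\mathrm{Re}\,\sigma|<\tfrac12$, both coefficients in $\mathrm{Re}\,\lambda_{m,n}=(1+2\mathrm{Re}\,\sigma)m+(1-2\mathrm{Re}\,\sigma)n$ are positive, so $|\lambda_{m,n}|\ge\mathrm{Re}\,\lambda_{m,n}\ge c(m+n)$ with $c:=1-2|\mathrm{Re}\,\sigma|>0$; therefore $\big|4\lambda_{m,n}^2((\lambda_{m,n}-1)^2-4\sigma^2)\big|\ge c'(m+n)^4$ once $m+n\ge R_0$. For the finitely many remaining indices with $m+n<R_0$ (and not among the three exceptional ones), $\sigma\notin\mathbb Q$ forces each of $\lambda_{m,n}$, $\lambda_{m,n}-1-2\sigma$, $\lambda_{m,n}-1+2\sigma$ to be nonzero, so the denominator is $\ge\delta>0$ there. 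No Diophantine assumption is needed, precisely because the denominator grows. Since the $(m+n)^4$ growth of the denominator cancels the worst-case $(m+n)^4$ growth inside $\Phi_{m,n}$, we obtain, with a constant $C_1$ independent of $(m,n)$,
\[
|a_{m,n}|\ \le\ C_1\Big(\sum_{\substack{(k,l)+(k',l')=(m,n)\\ (k,l)\notin\{(0,0),(m,n)\}}}|a_{k,l}|\,|a_{k',l'}|\ +\ \sum_{N=1}^{N_0}\ \sum_{(k,l)+(k',l')=(m-\frac N2,n-\frac N2)}|a_{k,l}|\,|a_{k',l'}|\Big).
\]

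Next I would majorize. Put $b_{m,n}:=|a_{m,n}|$, $\beta_j:=\sum_{m+n=j}b_{m,n}$, $B(x,y):=\sum b_{m,n}x^my^n$, and let $H(z)$ be the power-series solution with $H(0)=b_{0,0}$ of the algebraic equation $H=b_{0,0}+(b_{1,0}+b_{0,1})z+C_1\big[(H-b_{0,0})^2+(z+z^2+\cdots+z^{N_0})H^2\big]$. At $(z,H)=(0,b_{0,0})$ the defining polynomial vanishes while its $H$-derivative equals $-1\ne0$, so by the analytic implicit function theorem $H$ is holomorphic near $0$, with nonnegative Taylor coefficients (immediate induction). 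Using the generating-function identity $\sum_{(m,n)\ne(0,0)}\big(\sum_{(k,l)+(k',l')=(m,n),\,(k,l)\notin\{(0,0),(m,n)\}}b_{k,l}b_{k',l'}\big)x^my^n=(B-b_{0,0})^2$, specialized at $x=y=z$, together with the displayed bound, one proves by induction on $j$ that $\beta_j\le[z^j]H$. Consequently $\sum_{(m,n)}b_{m,n}\rho^{m+n}=H(\rho)<\infty$ for every $\rho$ below the radius of convergence $\rho_H>0$ of $H$.

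Finally, for $t$ of bounded argument and $0<|t|$ small, $|t^{\sigma^2+\lambda_{m,n}}|\le|t|^{\mathrm{Re}\,\sigma^2}\big(|t|^{c}e^{2|\arg t|\,|\mathrm{Im}\,\sigma|}\big)^{m+n}$, so (\ref{eq2}) is dominated in modulus by $|t|^{\mathrm{Re}\,\sigma^2}\,H\!\big(|t|^{c}e^{2|\arg t||\mathrm{Im}\,\sigma|}\big)$, finite as soon as $|t|^{c}e^{2|\arg t||\mathrm{Im}\,\sigma|}<\rho_H$; this region contains, for each $\theta_0$, a punctured disc $\{0<|t|<\varepsilon(\theta_0),\ |\arg t|\le\theta_0\}$, on which the series converges absolutely and locally uniformly and hence defines a holomorphic function. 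The half-integer index case reduces to the above by the substitution $f=tg$, which yields an integer-type ansatz with the same $\sigma$ satisfying an equation of the same shape (only the coefficient of $\mathcal D^2_{\log t}g\cdot g$ changes); the leading recursion coefficient is again a degree-$4$ polynomial in $\lambda_{m,n}$, nonzero off a finite set and of size $\asymp(m+n)^4$, so the same majorant argument applies verbatim. The part I expect to be delicate is the bookkeeping in the first two paragraphs: one must verify that the lowest-degree part --- which Theorem~1 pins down --- produces a leading recursion coefficient growing exactly like $(m+n)^4$, in step with the $\mathcal D^4_{\log t}$-driven growth of every other coefficient, since only then does the division by the denominator collapse into a single constant $C_1$ and the majorant close.
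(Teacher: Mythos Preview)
Your argument is correct and follows the same strategy as the paper: extract the recursion for $a_{m,n}$, verify that the diagonal coefficient $4\lambda_{m,n}^2\big((\lambda_{m,n}-1)^2-4\sigma^2\big)$ vanishes only at $(0,0),(1,0),(0,1)$ and otherwise grows like $(m+n)^4$ (this is where both hypotheses $\sigma\notin\mathbb Q$ and $|\mathrm{Re}\,\sigma|<\tfrac12$ enter), bound the remaining contributions by $C(m+n)^4$ times convolution sums, and close with a Cauchy majorant solving an algebraic quadratic. The execution differs only in packaging: the paper keeps a two-variable majorant $g(t)=t^{\sigma^2}\sum b_{m,n}t^{(2\sigma+1)m+(-2\sigma+1)n}$ and solves its quadratic explicitly, which yields an explicit description of the convergence domain (their Remark~2); you instead collapse to one variable via $\beta_j=\sum_{m+n=j}|a_{m,n}|$ and invoke the analytic implicit function theorem for $H$, which is tidier but gives only an existence statement for the radius. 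Your denominator lower bound is in fact spelled out more carefully than in the paper, where the constant $L$ in $R$ and $A_N$ is never defined. One small slip: your closing sentence about reducing ``the half-integer index case'' by $f=tg$ is unnecessary and slightly off---the ansatz (\ref{eq2}) is a \emph{single} series over both integer and half-integer pairs, and the odd-$N$ perturbation terms couple them; your level-$j$ majorant $\beta_j$ already sums over both types simultaneously, so no separate reduction is needed.
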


\begin{proof}
As we saw in the proof of Theorem 1, when $a_{0,0},a_{1,0},a_{0,1}$ are defined, the remaining coefficients are uniquely determined starting from the one with small $m+n$. We clarify the recurrence formula of the coefficients and construct a simpler dominant series.

Taking note of
\begin{align}
\mathcal{D}_{\log t}^4 (a t^\alpha +bt^\beta) \cdot (at^\alpha +bt^\beta)&=2(\alpha -\beta)^4 abt^{\alpha+\beta}, \\
\mathcal{D}_{\log t}^2 \delta (at^\alpha +bt^\beta) \cdot (at^\alpha +bt^\beta)&=(\alpha -\beta)^2 (\alpha+ \beta) abt^{\alpha+\beta}, \\
\mathcal{D}_{\log t}^2 (at^\alpha +bt^\beta) \cdot (at^\alpha +bt^\beta)&=2(\alpha -\beta)^2 abt^{\alpha+\beta},
\end{align}
when substituting (\ref{eq2}) into the left side of (\ref{eq1}), we obtain the coefficients of $t^{2 \sigma^2+(2\sigma +1)m+(-2\sigma +1)n}$ as follows.

\begin{gather}
\sum_{0\le k \le m,0\le l \le n}\rho_{k,l}^2\left\{ (\rho_{k,l}-1)^2-4 \sigma^2 -4 \left\{ (2 \sigma +1)k+(-2\sigma +1)l\right\} \right\}a_{k,l}a_{m-k,n-l} \nonumber \\
+ \sum_{N=1}^{N_0} \sum_{0\le k \le m-\frac{1}{2}N,0\le l\le n-\frac{1}{2}N} \sum_{K=0}^{4} \sum_{I=0}^{\left[ \frac{1}{2} K \right]}  \alpha_{N,K,I}\left( \sigma^2 +(2 \sigma +1)k +(-2 \sigma +1)l \right)^I \label{eq3} \\
\cdot \left( \sigma^2 +(2 \sigma+1)(m-\frac{1}{2}N-k)+(-2 \sigma +1)(n-\frac{1}{2}N-l)\right)^{K-I} a_{k,l}a_{m-\frac{1}{2}N-k,n-\frac{1}{2}N-l}. \nonumber
\end{gather}
We set $\rho_{k,l}:=(2\sigma +1)(m-2k)+(-2 \sigma +1)(n-2l)$ here. Note that for each sum $(k,l)\in {\mathbb{Z}_{\ge 0}}^2 $ or $(k,l)\in \left( \frac{1}{2} +\mathbb{Z}_{\ge 0} \right)^2$. From the condition that (\ref{eq3}) is $0$, $a_{m,n}$ is determined. Since we assumed $\sigma \notin \mathbb{Q}$, $t^{2 \sigma^2+(2\sigma +1)m+(-2\sigma +1)n}$ for different $m,n$ will never match.

When $m+n\ge 2$ or $(m,n)=(\frac{1}{2},\frac{1}{2})$, there exist constants $R,A_N>0\ (N\in \mathbb{Z}_{>0})$ independent of $m,n$ and following estimate holds.
\begin{align}
\left| a_{m,n} \right| \le & \frac{R}{\left| a_{0,0}\right| }\sum_{\frac{1}{2}\le k \le m-\frac{1}{2},\frac{1}{2}\le l-1 \le n-\frac{1}{2}}\left| a_{k,l}a_{m-k,n-l}\right| \nonumber \\
&+\frac{1}{\left| a_{0,0} \right|}\sum_{N=1}^\infty A_N \sum_{0\le k \le m-\frac{1}{2}N,0\le l\le n-\frac{1}{2}N} \left| a_{k,l}a_{m-\frac{1}{2}N-k,n-\frac{1}{2}N-l}\right| \label{eq4}
\end{align}

Since $\left| \rho_{k,l}\right| \le (2\left| \sigma \right|+1)(m+n)$ and $ \left| (2 \sigma +1)k+(-2\sigma +1)l \right| \le (2\left| \sigma \right|+1)(m+n)$, $R$ can be taken as
\begin{equation}
R=\frac{2(2\left| \sigma \right|+1)^2\{ (2\left| \sigma \right|+1)^2+4|\sigma|^2+2(2\left| \sigma \right|+1)\} }{L^4}.
\end{equation}

Also, when $m+n \ge 1$, the following holds.
\begin{gather}
\alpha_{N,K,I}\left( \sigma^2 +(2 \sigma +1)k +(-2 \sigma +1)l \right)^I \left( \sigma^2 +(2 \sigma+1)(m-\frac{1}{2}N-k)+(-2 \sigma +1)(n-\frac{1}{2}N-l)\right)^{K-I} \nonumber \\
\le \left| \alpha_{N,K,I} \right| \left( |\sigma |^2 +(2 |\sigma |+1)(m+n)\right)^4 \le \left| \alpha_{N,K,I} \right| \left( |\sigma| +1\right)^2 (m+n)^4.
\end{gather}

Therefore, setting $A_N:=\frac{2 \left( \left| \sigma \right| +1\right)^2}{L^4} \sum_{K=0}^{4} \sum_{I=0}^{\left[ \frac{1}{2} K \right]} \left| \alpha_{N,K,I} \right|$, then (\ref{eq4}) holds when $m+n\ge 2$.

Then, among the two solutions $g(t)=t^{\sigma^2} \sum_{m,n\ge 0}b_{m,n}t^{(2\sigma +1)m+(-2\sigma +1)n}$ of the following equation, the one with $b_{0,0}=-\frac{|a_{0,0}|}{2R}$ is the dominant series.
\begin{gather}
\left( 1+\sum_{N=1}^{N_0} A_N t^N \right) g^2+\sum_{N=1}^{N_0} A_Nt^N \frac{|a_{0,0}|}{R} t^{\sigma^2}g\nonumber \\
=\frac{|a_{0,0}|}{2R}\left( \frac{|a_{0,0}|}{2R}-2|a_{1,0}|t^{2\sigma +1}-2|a_{0,1}|t^{-2\sigma +1}-\sum_{N=1}^{N_0} A_N \left(\frac{|a_{0,0}|}{R}+2|a_{0,0}|\right) t^N\right)t^{2\sigma^2},
\end{gather}
The solution $g(t)$ is expressed as follows.
\begin{align}
g(t)&=-t^{\sigma^2} \frac{\sum_{N=1}^{N_0} A_Nt^N \frac{|a_{0,0}|}{R}}{2 \left( 1+\sum_{N=1}^{N_0} A_N t^N\right)}+t^{\sigma^2}\sqrt{\left\{ \frac{\sum_{N=1}^{N_0} A_Nt^N \frac{|a_{0,0}|}{R}}{2 \left( 1+\sum_{N=1}^{N_0} A_N t^N\right)}\right\}^2 +\gamma (t)}, \nonumber \\
\gamma (t)&=\frac{|a_{0,0}|}{2R}\left( \frac{|a_{0,0}|}{2R}-2|a_{1,0}|t^{2\sigma +1}-2|a_{0,1}|t^{-2\sigma +1}-\sum_{N=1}^{N_0} A_N \left(\frac{|a_{0,0}|}{R}+2|a_{0,0}|\right) t^N\right).
\end{align}
For $b_{0,0}$, it is smaller than that of the original series $f(t)$, but all the recurrence formulas at $m+n \ge 1$ are more increasing than (\ref{eq4}), and $b_{m,n}\ge \left| a_{m,n} \right|$ holds. Therefore, $g(t)$ is the dominant series of $f(t)$. Specifically, we have
\begin{equation}
b_{1,0}=|a_{1,0}|,~b_{0,1}=|a_{0,1}|,
\end{equation}

If $(m,n) \ne (1,0),(0,1)$ and $m \ne n$, then
\begin{align}
b_{m,n} = & \frac{R}{\left| a_{0,0}\right| }\sum_{\frac{1}{2}\le k \le m-\frac{1}{2},\frac{1}{2}\le l-1 \le n-\frac{1}{2}} b_{k,l}b_{m-k,n-l} \nonumber \\
&+\frac{1}{\left| a_{0,0} \right|}\sum_{N=1}^{N_0} A_N \sum_{0\le k \le m-\frac{1}{2}N,0\le l\le n-\frac{1}{2}N}  b_{k,l}b_{m-\frac{1}{2}N-k,n-\frac{1}{2}N-l}
\end{align}
And if $(m,n)\ne (0,0)$ and $m=n$, then
\begin{align}
b_{m,n} = & \frac{R}{\left| a_{0,0}\right| }\sum_{\frac{1}{2}\le k \le m-\frac{1}{2},\frac{1}{2}\le l-1 \le n-\frac{1}{2}} b_{k,l}b_{m-k,n-l} \nonumber \\
&+\frac{1}{\left| a_{0,0} \right|}\sum_{N=1}^{N_0} A_N \sum_{\frac{1}{2}\le k \le m-\frac{1}{2}N,\frac{1}{2}\le l\le n-\frac{1}{2}N}  b_{k,l}b_{m-\frac{1}{2}N-k,n-\frac{1}{2}N-l} \nonumber \\
&+A_{m+n} \left( \frac{|a_{0,0}|}{2R}+|a_{0,0}| \right)
\end{align}
All of the $b_{k,l}$ are larger than $\left| a_{k,l} \right|$. And compared to (\ref{eq4}), $b_{m,n}\ge \left| a_{m,n} \right|$ holds.
\end{proof}

\begin{rem}
The convergence of the series follows if we use the results of \cite{gontsov}, but here is a simple proof in a form that shows the domain of convergence.
% From the contents of the root sign of the dominant series $g(t)$ of Theorem $2$, we have
%\begin{equation}
%\sqrt{\left\{ \frac{\sum_{N=1}^{N_0} A_N\left| t\right|^N \frac{|a_{0,0}|}{R}}{2 \left( 1+\sum_{N=1}^{N_0} A_N \left| t\right|^N\right)}\right\}^2+\tilde{\gamma}(t)}. \label{eqsqrt}
%\end{equation}
The solution series to the equation of Theorem $2$ converge in the region where the series expansion around $0$ in the following converges.
\begin{equation}
\sqrt{\left\{ \frac{\sum_{N=1}^{N_0} A_N\left| t\right|^N \frac{|a_{0,0}|}{R}}{2 \left( 1+\sum_{N=1}^{N_0} A_N \left| t\right|^N\right)}\right\}^2+\tilde{\gamma}(t)}. \label{eqsqrt}
\end{equation}
Here, we set
\begin{equation}
\tilde{\gamma}(t)=\frac{|a_{0,0}|}{2R}\left( \frac{|a_{0,0}|}{2R}-2|a_{1,0}t^{2\sigma +1}|-2|a_{0,1}t^{-2\sigma +1}|-\sum_{N=1}^{N_0} A_N t^N\left(\frac{|a_{0,0}|}{R}+2|a_{0,0}|\right) t^N\right) .
\end{equation}
The range of convergence is not empty because (\ref{eqsqrt}) does not become $0$ when $t=0$ is substituted.

Although $2\sigma +1$ and $2\sigma -1$ appear in the power of $t$, the region of convergence includes the area around $0$ since $\left| \mathrm{Re} \ \sigma \right| <\frac{1}{2}$.
\end{rem}

\begin{rem}
Although we assumed $\left| \mathrm{Re} \ \sigma \right| <\frac{1}{2}$ for the complex parameter $\sigma$ in Theorem $2$, if we know that the formal solution of the equation has the form (\ref{eqe}), we can shift $m$ by an integer and instead use the following notation to satisfy the assumptions of Theorem $2$ ($\left| \mathrm{Re} \ \sigma -m_0 \right| <\frac{1}{2}$). Here, $m_0 $ is the integer closest to $\sigma$.
\begin{equation}
\tau (t)=\sum_{m\in \mathbb{Z},n\in \mathbb{Z}_{\ge 0}} b_{m-m_0,n}(\sigma) t^{(\sigma-m_0 +m)^2+n} 
\end{equation}
Therefore, convergence of formal solutions of the form (\ref{eqe}) is proved for any $\sigma$ satisfying $\left| \mathrm{Re} \ \sigma \right| \notin \frac{1}{2}+\mathbb{Z}$.
\end{rem}

\begin{rem}
 By the absolute convergence of the $\tau $ series, the convergence of the conformal block function $\mathscr{B} \left( \vec{\theta},\sigma ;t\right)$ also follows since it is a partial sum of the $\tau $ series.
\end{rem}

\newpage

\end{document}